\date{\today}
\title[A spectral equivalent condition of the
$P$-polynomial property]
{A spectral equivalent condition of the $P$-polynomial property
for association schemes}
\author[H.~Kurihara and H.~Nozaki ]
{Hirotake Kurihara, Hiroshi Nozaki}
\address{Department of Integrated Arts and Science,
Kitakyushu National College of Technology,
5-20-1 Shii, Kokuraminamiku, Kitakyushu, Fukuoka, 802-0985,
Japan.
}
\email{kurihara@kct.ac.jp}
\address{Department of Mathematics
Aichi University of Education
1 Hirosawa, Igaya-cho, Kariya-city, Aichi, 448-8542, 
Japan.
}
\email{hnozaki@auecc.aichi-edu.ac.jp}
\subjclass[2010]{Primary~05E30, Secondary~05C50}
\keywords{$P$-polynomial association scheme,
distance-regular graph,
graph spectrum,
spectral excess theorem,
predistance polynomial.
}
\numberwithin{equation}{section}
\newtheorem{thm}{Theorem}[section]
\newtheorem*{thm*}{Theorem}
\newtheorem{lem}[thm]{Lemma}
\theoremstyle{definition}
\theoremstyle{remark}
\newtheorem{rem}[thm]{Remark}
\newcommand{\R}{\mathbb{R}}
\newcommand{\C}{\mathbb{C}}
\def\set#1#2{\{#1\,|\,#2\}}
\newcommand{\trans}[1]{{}^t\hspace{-0.4ex}#1}
\newcommand{\rank}{\mathrm{rank}\,}
\newcommand{\aprod}[2]{
\langle #1,#2 \rangle
}
\begin{document}
\begin{abstract}
We give two equivalent conditions of the $P$-polynomial property of a symmetric association scheme. 
The first equivalent condition shows that the $P$-polynomial property is determined only by the 
first and second eigenmatrices of the symmetric association scheme. The second equivalent condition is another expression of 
the first using predistance polynomials.  
\end{abstract}

\maketitle

\section{Introduction}

An association scheme
is a finite set
with  higher regularity of relations among
the elements of the set.
As interesting objects,
$P$- or $Q$-polynomial association schemes are
closely related to
the theory of orthogonal polynomials or linear programming~\cite{Bannai1984aci, Delsarte1973aat},
and are main topics in algebraic combinatorics.
Kurihara and Nozaki~\cite{Kurihara2012coq} found
a simple equivalent condition of the $Q$-polynomial property
using the first and second eigenmatrices of the association scheme.
In this paper,
we give a ``dual'' version of the above theorem
for the $P$-polynomial property
using the first eigenmatrix whose entries are
the eigenvalues of the adjacency matrices.
Namely the following is a spectral equivalent condition of the $P$-polynomial property
for association schemes.
\begin{thm}
\label{thm:main_thm}
Let $\mathfrak{X}=(X,\{R_i\}^d_{i=0})$ be a symmetric
association scheme of class $d$.
Suppose that the entries $\{\theta_j\}^d_{j=0}$
of the first column of the first eigenmatrix of $\mathfrak{X}$
are mutually distinct.
Then the following are equivalent:
\begin{enumerate}[{$($}1{$)$}]
\item $\mathfrak{X}$ is a $P$-polynomial association scheme
with respect to the first adjacency matrix $A_1$.
\item There exists $l\in \{0,1,\ldots ,d\}$ such that
for each $h \in \{1,2,\ldots ,d\}$,
\begin{equation}
\label{eq:important}
\prod^{d}_{\substack{j=1\\
j \neq h}}
\frac{\theta_0-\theta_j}{\theta_h-\theta_j}= -Q_h(l), 
\end{equation}
where $Q_h(l)$ is the $(l,h)$-entry of the second eigenmatrix of $\mathfrak{X}$.

\end{enumerate}
Moreover if $($2$)$ holds, then $A_l$ is the $d$-th matrix with respect to the 
resulting polynomial ordering.
\end{thm}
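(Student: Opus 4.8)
The plan is to route both sides of \eqref{eq:important} through the \emph{predistance polynomials} $p_0,p_1,\dots,p_d$ (with $\deg p_k=k$) determined by the spectrum of $A_1$, that is, the orthogonal polynomials for the inner product $\langle f,g\rangle=\frac1n\sum_{i=0}^d m_i f(\theta_i)g(\theta_i)$ normalized by $\|p_k\|^2=p_k(\theta_0)$. Since the $\theta_i$ are mutually distinct, $A_1$ has $d+1$ distinct eigenvalues and hence generates the Bose--Mesner algebra; in particular $E_i=\prod_{j\ne i}(A_1-\theta_j I)/(\theta_i-\theta_j)$ and every $A_j$ is a polynomial in $A_1$. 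I first observe that the left-hand side of \eqref{eq:important} is the value $L_i(\theta_0)$ of the Lagrange interpolation polynomial $L_i(x)=\prod_{j=1,\,j\ne i}^{d}(x-\theta_j)/(\theta_i-\theta_j)$ based at the nodes $\theta_1,\dots,\theta_d$, while the right-hand side is, in the notation of Section~\ref{sec:2}, $-Q_i(l)=-\frac{m_i}{k_l}P_i(l)$, where $P_i(l)$ is the eigenvalue of $A_l$ on $E_i$ and $k_l=P_0(l)$.

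The key computation is an explicit formula for $p_d$ on the spectrum. Imposing $\langle p_d,x^k\rangle=0$ for $0\le k\le d-1$ and using the divided-difference identity $\sum_{i=0}^d \theta_i^{\,k}/\prod_{j\ne i}(\theta_i-\theta_j)=\delta_{k,d}$ forces $m_i\,p_d(\theta_i)=C/\prod_{j\ne i}(\theta_i-\theta_j)$ for a constant $C$; evaluating at $i=0$ (where $m_0=1$) fixes $C$, and separating the factor $j=0$ from the product yields
\[
p_d(\theta_i)=-\frac{p_d(\theta_0)}{m_i}\,L_i(\theta_0)\qquad(1\le i\le d).
\]
Hence \eqref{eq:important} is equivalent to $P_i(l)/k_l=p_d(\theta_i)/p_d(\theta_0)$ for every $i$, i.e.\ to $A_l=c\,p_d(A_1)$ with $c=k_l/p_d(\theta_0)$. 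The trace identities $\frac1n\tr A_l^2=k_l$ and $\frac1n\tr p_d(A_1)^2=\|p_d\|^2=p_d(\theta_0)$ then give $k_l=c^2p_d(\theta_0)=c\,k_l$, so $c=1$, and condition~(2) holds for the index $l$ \emph{if and only if} $A_l=p_d(A_1)$.

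With this reformulation the two implications split cleanly. For (1)$\Rightarrow$(2): if $\mathfrak X$ is $P$-polynomial with respect to $A_1$, then the distance-$d$ adjacency matrix equals $p_d(A_1)$ (the distance-$k$ matrix being $p_k(A_1)$ in a distance-regular graph), so it is one of the relations, say $A_l$, and reading the displayed identity backwards gives \eqref{eq:important}. For (2)$\Rightarrow$(1): condition~(2) yields a relation with $A_l=p_d(A_1)$, a genuine $0/1$ matrix of constant row sum $p_d(\theta_0)$; this is exactly the equality case of the spectral excess theorem recalled in Section~\ref{sec:2}, which forces $\mathfrak X$ to be $P$-polynomial and identifies $A_l$ as the $d$-th adjacency matrix, giving the final assertion.

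The routine part is the interpolation identity of the second paragraph; the main obstacle is the direction (2)$\Rightarrow$(1), namely upgrading the single top-degree equality $A_l=p_d(A_1)$ to full $P$-polynomiality. Concretely one must show that each $p_k(A_1)$ is itself a $0/1$ relation matrix, which I expect to carry out by downward induction on $k$: the three-term recurrence for the $p_k$ (with $p_{d+1}(A_1)=0$, since $p_{d+1}$ vanishes on the whole spectrum) expresses $p_{d-1}(A_1)$, and then each lower $p_k(A_1)$, through products $A_1p_{k+1}(A_1)$ that are nonnegative integer combinations of the relations; controlling these nonnegativity and integrality constraints is the delicate step, and is where the predistance-polynomial inequalities underlying the spectral excess theorem enter. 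A secondary point to keep straight is the index and normalization convention for $Q_i(l)$ relative to $P_i(l)$, which must be the one for which $Q_i(l)=\frac{m_i}{k_l}P_i(l)$, as used above.
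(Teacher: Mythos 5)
Your reduction of condition (2) to the single matrix identity $A_l=p_d(A_1)$ is correct and is a genuinely different organization from the paper's: the interpolation computation reproduces the paper's identity \eqref{eq:graph_property}, the eigenvalue matching on every $E_i$ (legitimate because distinct $\theta_i$ make $A_1$ generate $\mathfrak{A}$) and the trace normalization forcing $c=1$ are both sound. In effect you have rediscovered condition (2) of Theorem \ref{thm:predistance-main}, which the paper deduces \emph{from} Theorem \ref{thm:main_thm} rather than using as a stepping stone toward it. Your direction (1)$\Rightarrow$(2) is then fine modulo the standard fact that the distance polynomials of a distance-regular graph coincide with its predistance polynomials; the paper proves exactly this in Section \ref{sec:5} via the $\tau$-computation, so quoting it is acceptable.

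The genuine gap is in (2)$\Rightarrow$(1). The equality case of the spectral excess theorem is a statement about \emph{excesses}, i.e.\ about the path-distance-$d$ matrix of the graph $(X,R_1)$: distance-regularity is equivalent to the average of $|\Gamma_d(x)|$ equalling $p_d(\theta_0)$, equivalently to $p_d(A_1)$ being the distance-$d$ matrix. Your hypothesis --- that $p_d(A_1)$ equals some $0/1$ matrix $A_l$ in the Bose--Mesner algebra --- is not literally that statement (and, contrary to your appeal, Section \ref{sec:2} of the paper does not recall the spectral excess theorem; it would have to be imported from Fiol--Garriga or van Dam). The missing bridge is: for $h<d$ one has $\tau(A_1^h\circ A_l)=\tr\!\big(A_1^h\,p_d(A_1)\big)=|X|\aprod{x^h}{p_d}=0$ while $A_1^h\circ A_l$ is entrywise nonnegative, so $A_1^h\circ A_l=0$; since distinct $\theta_i$ force $m_0=1$, the graph $(X,R_1)$ is connected, and it follows that every pair in $R_l$ lies at path distance exactly $d$, so every vertex has excess at least $k_l=p_d(\theta_0)$; the inequality half of the spectral excess theorem (average excess $\le p_d(\theta_0)$) then forces equality and hence distance-regularity, with $A_l$ the distance-$d$ matrix. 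You never supply this bridge; indeed your final paragraph concedes that this direction is the ``main obstacle'' and retreats to a downward induction on the three-term recurrence which you explicitly leave incomplete --- and which is not routine, since it would amount to reproving the implication from scratch. By contrast, the paper avoids the spectral excess theorem altogether: it proves the purely algebraic Lemma \ref{lem:key} ($P$-polynomiality with respect to $A_1$ is equivalent to some $A_j$ appearing in no power $A_1^h$, $h\le d-1$) and then verifies $A_1^h\circ A_l=0$ for $h\le d-1$ directly from \eqref{eq:important} via Lemma \ref{lem:sum_Ki_2} --- the same Hadamard-vanishing computation as the bridge above, but done without passing through $p_d$ at all.
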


After we obtained Theorem~\ref{thm:main_thm},
Nomura and Terwilliger~\cite{Nomura2011tmw}
gave its linear algebraic generalization.
In this paper,
we give a simple proof of this result
based on Kurihara and Nozaki~\cite{Kurihara2012coq}.
Moreover we investigate a relation between predistance polynomials
and this result.

$P$-polynomial association schemes
are identified with distance-regular graphs.
The spectrum of a graph can give several properties of the graph.
For example,
we can know regularity, bipartiteness, the number of the connected components,
and so on~\cite{Cvetkovi'c1997eog}.
Nevertheless  distance-regularity
is not determined only by the spectrum of a graph.
Fiol and Garriga~\cite{Fiol1997fla} gave a characterization of
distance-regularity using excesses and predistance polynomials determined by
the spectra of finite graphs, 
and this result is called a spectral excess theorem.
Here the excess of a vertex is the number of the vertices at distance $D$ from it, where $D$ is the diameter of the graph.
After this work other descriptions of the spectral excess theorem
were obtained by van Dam~\cite{Dam2008set}, 
and Fiol, Gago and Garriga~\cite{Fiol2010spo}.

The $P$-polynomial property demands a suitable ordering
of adjacency matrices.
On the other hand, Theorem~\ref{thm:main_thm} implies that
the $P$-polynomial property is determined
only by the spectrum of the first and the last matrices
with respect to the 
resulting polynomial ordering.
The last matrix has the information of the excess of the graph
$(X,R_1)$.
For a connected regular graph,
the value corresponding to the left-hand side in \eqref{eq:important}
can be expressed by using the predistance polynomials.
We give another expression of Theorem~\ref{thm:main_thm}
by predistance polynomials of the graph $(X,R_1)$.

\begin{thm}
\label{thm:predistance-main}
Let $\mathfrak{X}=(X,\{R_i\}^d_{i=0})$
be a symmetric association scheme of class $d$.
Suppose that the entries $\{\theta_j\}^d_{j=0}$
of the first column of the first eigenmatrix of $\mathfrak{X}$
are mutually distinct.
Let $\{p_i\}^d_{i=0}$ be the predistance polynomials
of the graph $(X,R_1)$.
Then the following are equivalent:
\begin{enumerate}[{$($}1{$)$}]
\item $\mathfrak{X}$ is a $P$-polynomial association scheme
with respect to $A_1$.
\item There exists $l\in \{0,1,\ldots ,d\}$ such that
for each $h\in \{0,1,\ldots ,d\}$,
$p_d(\theta_h)=P_l(h)$ holds,
where $P_l(h)$ is the $(h,l)$-entry of the first eigenmatrix of
$\mathfrak{X}$.
\end{enumerate}
\end{thm}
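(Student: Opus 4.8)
The plan is to derive Theorem~\ref{thm:predistance-main} from Theorem~\ref{thm:main_thm} by expressing the left-hand side of \eqref{eq:important} through the top predistance polynomial $p_d$ of the graph $(X,R_1)$. Set $\pi_i=\prod_{j\neq i}(\theta_i-\theta_j)$ for $0\le i\le d$. An elementary manipulation---writing the numerator and denominator of the product in \eqref{eq:important} as the quotients $\pi_0/(\theta_0-\theta_i)$ and $\pi_i/(\theta_i-\theta_0)$ and cancelling---yields
\[
\prod^{d}_{\substack{j=1\\ j\neq i}}\frac{\theta_0-\theta_j}{\theta_i-\theta_j}
=-\frac{\pi_0}{\pi_i},\qquad i\in\{1,\dots,d\}.
\]
Everything then reduces to understanding the numbers $\pi_0/\pi_i$ in terms of $p_d$.

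The predistance polynomials are the orthogonal polynomials for the inner product $\langle f,g\rangle=\frac1n\sum^{d}_{h=0} m_h f(\theta_h)g(\theta_h)$, normalized by $\langle p_i,p_i\rangle=p_i(\theta_0)$ and with $\deg p_i=i$; in particular $p_d$ is, up to a positive scalar, the unique polynomial of degree $d$ orthogonal to $1,x,\dots,x^{d-1}$. Writing out $\langle p_d,x^k\rangle=0$ for $0\le k\le d-1$ shows that the vector $(m_h p_d(\theta_h))^{d}_{h=0}$ annihilates $(\theta_h^k)^{d}_{h=0}$ for all such $k$, a system whose solution space is one-dimensional; since the divided-difference identity $\sum^{d}_{h=0}\theta_h^k/\pi_h=0$ for $0\le k\le d-1$ exhibits $(1/\pi_h)_h$ as a nonzero solution, I obtain $m_h p_d(\theta_h)=c/\pi_h$ for a constant $c$, and hence, using $m_0=1$,
\[
\frac{p_d(\theta_h)}{p_d(\theta_0)}=\frac{\pi_0}{m_h\pi_h},\qquad 0\le h\le d.
\]
Combined with the first identity this presents the left-hand side of \eqref{eq:important} as $-m_i\,p_d(\theta_i)/p_d(\theta_0)$. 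Feeding in $Q_i(l)=m_iP_l(i)/k_l$ together with $k_l=P_l(0)$, condition~(2) of Theorem~\ref{thm:main_thm} becomes equivalent to $p_d(\theta_i)/p_d(\theta_0)=P_l(i)/P_l(0)$ for all $i\in\{1,\dots,d\}$.

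It then remains to match this ratio form with the equalities $p_d(\theta_h)=P_l(h)$ of condition~(2) in Theorem~\ref{thm:predistance-main}. If the latter holds, taking $h=0$ gives $p_d(\theta_0)=P_l(0)=k_l$, so the ratio equations follow and condition~(2) of Theorem~\ref{thm:main_thm} holds; by Theorem~\ref{thm:main_thm}, $\mathfrak{X}$ is then $P$-polynomial. Conversely, if $\mathfrak{X}$ is $P$-polynomial with respect to $A_1$, each $A_i$ is a polynomial of degree $i$ in $A_1$, and the normalization $\langle A_i,A_i\rangle=k_i=P_i(0)$ identifies these distance polynomials with the predistance polynomials; in particular $p_d(A_1)=A_d$, whence $p_d(\theta_h)=P_d(h)$ for all $h$ and condition~(2) holds with $l=d$. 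The main obstacle is exactly this passage between the two forms: Theorem~\ref{thm:main_thm} controls only the ratios $p_d(\theta_i)/p_d(\theta_0)$, whereas Theorem~\ref{thm:predistance-main} asserts the unnormalized values, so the argument must supply the normalization $p_d(\theta_0)=k_l$---available from the $P$-polynomial structure (equivalently from $p_d(A_1)=A_d$, which forces $l=d$) but not from the spectral identity alone.
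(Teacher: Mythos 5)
Your proof is correct and takes essentially the same route as the paper: your identity expressing the left side of \eqref{eq:important} as $-m_i p_d(\theta_i)/p_d(\theta_0)$ is precisely the paper's \eqref{eq:graph_property} (the paper derives it by pairing $p_d$ against the degree-$(d-1)$ polynomials $h_i(t)=\prod_{j\neq i}(t-\theta_j)$ rather than via your Vandermonde/divided-difference argument, but that is cosmetic), and your direction $(2)\Rightarrow(1)$ then runs through Theorem~\ref{thm:main_thm} with $Q_i(l)=m_iP_l(i)/k_l$ exactly as the paper does. Your direction $(1)\Rightarrow(2)$ also matches the paper's, which identifies the distance polynomials $v_i$ with the predistance polynomials via the functional $\tau(M)=\sum_{x,y}M(x,y)$; the only detail you elide is that the identification needs orthogonality $\langle v_i,v_j\rangle=0$ for $i\neq j$ (from $A_i\circ A_j=0$) in addition to the normalization $\langle v_i,v_i\rangle=k_i=v_i(\theta_0)$.
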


\section{Preliminaries}
\label{sec:2}
\subsection{Association schemes}
\label{sec:as}

We begin with a review of basic definitions concerning
association schemes.
The reader is referred to Bannai and Ito~\cite{Bannai1984aci} for 
background materials.

A {\it symmetric association scheme
$\mathfrak{X}=(X,\{R_i\}^{d}_{i=0})$ of class $d$}
consists of a finite set $X$ and a set $\{R_i\}^{d}_{i=0}$
of binary relations on $X$ satisfying
\begin{enumerate}
\item $R_0=\set{(x,x)}{x\in X}$,
\item $\{R_i\}^{d}_{i=0}$ is a partition of $X\times X$,
\item $\trans{R_i}=R_i$ for each $i\in \{0,1,\ldots ,d\}$,
where $\trans{R_i}=\set{(y,x)}{(x,y)\in R_i}$,
\item the numbers $|\set{z\in X}{\text{$(x,z)\in R_i$ and $(z,y)\in
R_j$}}|$ are constant whenever $(x,y)\in R_k$, for each $i,j,k\in
\{0,1,\ldots ,d\}$.
\end{enumerate}
Let $k_i$ denote the valency of the regular graph $(X,R_i)$.
Let $M_X(\mathbb{C})$ denote the algebra of matrices over
the complex field $\mathbb{C}$ with rows and columns indexed by $X$.
The $i$-th {\it adjacency matrix} $A_i$ in $M_X(\mathbb{C})$ of
$\mathfrak{X}$ is defined by
\[A_i(x,y)=\begin{cases}
1 & \text{if $(x,y)\in R_i$,}\\
0 & \text{otherwise.}
\end{cases}\]
\begin{comment}
From the definition of association schemes, it follows that
\begin{enumerate}[{(A}1{)}]
\item $A_0=I$,
where $I$ is the identity matrix,
\item $A_0+A_1+\cdots +A_d=J$,
where $J$ is the all-one matrix,
and $A_i \circ A_j=\delta_{i,j}A_i$
for $i,j\in \{0,1,\ldots ,d\}$,
\item $\trans{A_i}=A_i$ for each $i\in \{0,1,\ldots ,d\}$,
\item $A_i A_j=\sum^{d}_{i=0}p^k_{i,j}A_k$,
for each $i,j\in \{0,1,\ldots ,d\}$.
\end{enumerate}
\end{comment}
The vector space
$\mathfrak{A}$
spanned by $\{A_i\}^d_{i=0}$
over $\C$ forms a commutative algebra,
and is called the {\it Bose--Mesner algebra} of
$\mathfrak{X}$.
It is well known that $\mathfrak{A}$ is semi-simple
\cite[Section 2.3]{Bannai1984aci},
hence $\mathfrak{A}$ has the primitive idempotents
$E_0, E_1,\ldots ,E_d$.
We call $m_j:=\rank E_j$ the \emph{multiplicities} of $\mathfrak{X}$.
The {\it first eigenmatrix} $P=(P_i(j))^d_{j,i=0}$
and the {\it second eigenmatrix} $Q=(Q_j(i))^d_{i,j=0}$
of $\mathfrak{X}$ are defined by 
\[A_i=\sum^{d}_{j=0}P_i(j)E_j\ 
\text{and}\ 
E_j=\frac{1}{|X|}\sum^{d}_{i=0}Q_j(i)A_i,\]
respectively.
Note that the pair of $\{P_i(j)\}^{d}_{j=0}$ and $\{m_j\}^{d}_{j=0}$
corresponds to the spectrum of the graph $(X,R_i)$.
In particular, we use the notation $\theta_j=P_1(j)$
for $0\le j\le d$.

A symmetric association scheme is called a
\emph{$P$-polynomial scheme}
(or a \emph{metric scheme})
with respect to the ordering $\{A_i \}_{i=0}^d$
(or $\{R_i \}_{i=0}^d$),
if for each $i \in \{0,1,\ldots, d \}$, 
there exists a polynomial $v_i$ of degree $i$,
such that $A_i=v_i(A_1)$.
Moreover, a symmetric association scheme is called
a $P$-polynomial scheme with respect to $A_1$
(or $R_1$)
if the symmetric association scheme
has the $P$-polynomial property
with respect to some ordering
$A_0,A_1, A_{\xi_2},A_{\xi_3},\ldots ,A_{\xi_d}$.
Dually, a symmetric association scheme is called
a \emph{$Q$-polynomial scheme}
(or a \emph{cometric scheme}) 
with respect to the ordering $\{E_j \}_{j=0}^d$,
if for each $j \in \{0,1,\ldots, d \}$, 
there exists a polynomial $v_j^{\ast}$ of degree $j$, 
such that $|X| E_j=v_j^{\ast}(|X| E_1)$,
where the multiplication is the entrywise product.
\begin{rem}
\label{rem:p-poly-drg}
For a $P$-polynomial scheme $(X,\{R_i\}^d_{i=0})$ with respect to $R_1$,
the graph $(X,R_1)$ is distance-regular.
Conversely,
for a distance-regular graph $(X,R)$ with diameter $d$,
letting $R_i$ be the collection $(x,y)\in X\times X$ such that the distance between $x$ and $y$ is $i$
for $i\in\{0,1,\ldots ,d\}$,
we have a $P$-polynomial scheme $(X,\{R_i\}^d_{i=0})$ with respect to the ordering $\{R_i \}_{i=0}^d$~\cite{Bannai1984aci}.
\end{rem}

\begin{rem}
Kurihara and Nozaki~\cite{Kurihara2012coq} found
a simple equivalent condition of the $Q$-polynomial property
using the character table of the association scheme.
The detail of the equivalent condition is as follows.
For  a symmetric
association scheme $\mathfrak{X}=(X,\{R_i\}^d_{i=0})$
such that $\{Q_1(i)\}^d_{i=0}$ are mutually distinct,
$\mathfrak{X}$ is a $Q$-polynomial association scheme
with respect to $E_1$ if and only if
there exists $l\in \{0,1,\ldots ,d\}$ such that
for each $h \in \{1,2,\ldots ,d\}$,
\[
\prod^{d}_{\substack{i=1\\
i \neq h}}
\frac{Q_1(0)-Q_1(i)}{Q_1(h)-Q_1(i)}= -P_h(l). 
\]
\end{rem}

\subsection{The predistance polynomials of graphs}

As we see in Remark~\ref{rem:p-poly-drg},
a distance-regular graph $\Gamma$ has the structure of a $P$-polynomial scheme
and so has polynomials $\{v_i\}$.
In this section, we give ``predistance polynomials'' for any regular connected graph,
and we consider relations between predistance polynomials and distance-regular graphs.

All graphs in this paper are assumed to be finite.
Let $\Gamma =(X,R)$ be a connected regular graph
with the adjacency matrix $A$ and the spectrum
$\{\theta_0 ^{m_0},\theta_1 ^{m_1},\ldots ,\theta_d ^{m_d}\}$,
where $\theta_0>\theta_1>\cdots >\theta_d$.
Let $Z(t):=\prod^d_{j=0}(t-\theta_j)\in \R[t]$.
We define
the inner product
on $\R[t]/(Z)$ by
\[
\aprod{p}{q}
=
\frac{1}{|X|}
\sum^{d}_{j=0}m_j p(\theta_j)q(\theta_j)
\]
for $p,q\in \R[t]/(Z)$.
The \emph{predistance polynomials} $p_0,p_1,\ldots ,p_d$ of $\Gamma$
are the unique polynomials
satisfying $\deg p_i=i$
and for $i,j \in \{0,1,\ldots ,d\}$,
$\aprod{p_i}{p_j}$ is $p_i(\theta_0)$
if $i=j$ and $0$ otherwise.
It is well known that $p_i(\theta_0)>0$
for any $i \in \{0,1,\ldots ,d\}$ \cite{Dam2008set}.

For a graph of diameter $D$,
the \emph{excess $\gamma _x$ of a vertex $x$}
is the number of the vertices at distance $D$ from $x$.
An application of predistance polynomials and excesses
is shown in the following theorem.
Namely, distance-regularity is determined by predistance polynomials and excesses.
\begin{thm}[Excess theorem (Fiol, Gago and Garriga~\cite{Fiol2010spo})]
Let $(X,R)$ be a connected regular graph with $d+1$ distinct eigenvalues
and diameter $D=d$,
$\{p_i\}^d_{i=0}$ be
the predistance polynomials,
and $\gamma _x$ be the excess of $x\in X$.
Then 
\[\
\frac{1}{|X|}\sum_{x\in X} \gamma _x \le p_d (\theta_0).
\]
Moreover equality is
attained if and only if $(X,R)$ is a distance-regular graph.
\end{thm}

There are many researches of distance-regularity
or walk-regularity
using predistance polynomials~\cite{Dalfo2012dco,
Dalfo2011oad,
Dalfo2008clm,
Dam2008set,
Dam2012asp,
Fiol1997fla,
Liu2011kwr}.

\begin{rem} \label{rem:drg2.4}
If $\Gamma$ is a distance-regular graph,
the predistance polynomials $\{p_i\}$ of $\Gamma$
coincide with the polynomial $\{v_i\}$ associated with the $P$-polynomial structure of $\Gamma$~\cite{Dam2008set}.
\end{rem}

The following lemma is used later.
\begin{lem}
\label{eq:graph_property}
Let $\Gamma$ be a connected regular graph
with the spectrum
$\{\theta_0 ^{m_0},\theta_1 ^{m_1},\ldots ,\theta_d ^{m_d}\}$
and the predistance polynomials $\{p_i\}^d_{i=0}$.
Then for each $h\in \{1,2,\ldots ,d\}$,
\[
\prod^{d}_{
\substack{
j=1\\
j\neq h
}}\frac{\theta_0-\theta _j}{\theta _h-\theta _j}
=
-\frac{m_h p_d(\theta _h)}{p_d(\theta_0)}.
\]
\end{lem}
\begin{proof}
Put $f_h(t)=\prod^{d}_{j=1, j\neq h}(t-\theta _j)
$,
for $i\in \{1,2,\ldots ,d\}$.
These polynomials have degree ${d-1}$ which is expressed as 
a linear combination of $\{p_i\}_{i=0}^{d-1}$, 
hence they are orthogonal to $p_d$.
Thus, note that $\Gamma$ is connected and $m_0=1$, and we have 
\[
0=
|X|\aprod{p_d}{f_h}
=
\sum^{d}_{i=0}
m_i p_d(\theta_i) f_h(\theta_i)
=
p_d(\theta_0) f_h(\theta_0)+m_h p_d(\theta _h) f_h(\theta _h). 
\]
Therefore we obtain the desired equation.
\end{proof}

\section{Proof of Theorem \ref{thm:main_thm}}
\label{sec:3}
First, we prove that (1) implies (2) in
Theorem \ref{thm:main_thm}.
Suppose $\mathfrak{X}$ is a $P$-polynomial association scheme
with respect to $A_1$.
Let $\{A_i\}^d_{i=0}$ be the $P$-polynomial ordering
and $\{v_i\}^d_{i=0}$ be the polynomials associated with the $P$-polynomial structure.
For each $h \in \{1,2\ldots ,d \}$,
we put
$\kappa_h=\prod^{d}_{j=1,\  j\neq h}
(\theta_0-\theta_j)/(\theta_{h}-\theta_{j})$,
and define the matrix   
\[
M^{\ast}_h:=\prod^{d}_{\substack{j=1\\
j \neq h}}
\frac{A_1- \theta_j I}{\theta_h-\theta_j}
\]
in $\mathfrak{A}$.
Since $M^{\ast}_h$ is a polynomial in $A_1$ of degree $d-1$,
it can be expressed as a combination of $\{A_i\}^{d-1}_{i=0}$.
Thus we have
\begin{equation}
\label{eq:M_ioA_s=0}
M^{\ast}_h \circ A_d = 0,
\end{equation}
where $\circ$ denotes the Hadamard product,
that is,
the entry-wise matrix product.
On the other hand,
we consider the expansion of $M^{\ast}_h$
in terms of the primitive idempotents $\{E_j \}^{d}_{j=0}$.
Since $A_1 E_j = \theta_j E_j$, we have
\[
M^{\ast}_h E_j =
\begin{cases}
\kappa_h E_0 & \text{if $j=0$,}\\
E_h & \text{if $j=h$,}\\
0 & \text{otherwise.}
\end{cases}
\]
This means
$M^{\ast}_h = \kappa_h E_0 +E_h$.
Hence,
by (\ref{eq:M_ioA_s=0}),
it follows that
$E_ h\circ A_d= -\kappa_h A_d/|X|$,
that is,
$Q_h(d)=-\kappa_h$.
Therefore the desired result follows.

Conversely,
we prove that (2) implies (1) in Theorem \ref{thm:main_thm}.
The following lemmas are used later.
\begin{lem}[\cite{Kurihara2012coq}]
\label{lem:sum_Ki_2}
For mutually distinct $\beta_1,\beta_2, \ldots, \beta_d$, 
the following formal identity holds:  
\[
\sum_{i=1}^d \beta_i^k
\prod_{\substack{j=1\\ j \ne  i}}^d
\frac{x-\beta_j}{\beta_i-\beta_j}=x^k\]
for all $k \in \{0,1, \ldots ,d-1\}$.
\end{lem}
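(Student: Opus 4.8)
The plan is to recognize the left-hand side as a Lagrange interpolation polynomial. For the $d$ distinct nodes $\beta_1,\beta_2,\ldots,\beta_d$, define the fundamental (Lagrange basis) polynomials
\[
L_i(x)=\prod_{\substack{k=1\\ k\neq i}}^{d}\frac{x-\beta_k}{\beta_i-\beta_k},
\qquad i\in\{1,2,\ldots,d\},
\]
so that the identity to be proved reads $\sum_{i=1}^{d}\beta_i^{h}\,L_i(x)=x^{h}$. I would prove the two sides agree by showing they are polynomials of degree at most $d-1$ that take the same values at the $d$ points $\beta_1,\ldots,\beta_d$.

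First I would record the degree bound: each $L_i$ is a product of $d-1$ linear factors, hence has degree exactly $d-1$, so the left-hand side is a polynomial in $x$ of degree at most $d-1$; since $h\le d-1$, the right-hand side $x^{h}$ also lies in the space of polynomials of degree at most $d-1$. Next I would evaluate both sides at an arbitrary node $\beta_j$. The key property is the orthonormality $L_i(\beta_j)=\delta_{i,j}$: for $i\neq j$ the factor $x-\beta_j$ occurs in the numerator of $L_i$ and kills it at $x=\beta_j$, while $L_j(\beta_j)=1$ since numerator and denominator coincide factor by factor. Consequently the left-hand side collapses to $\sum_{i=1}^{d}\beta_i^{h}\,\delta_{i,j}=\beta_j^{h}$, which is precisely the value of the right-hand side at $\beta_j$.

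Finally I would invoke the fact that two polynomials of degree at most $d-1$ that agree at the $d$ distinct points $\beta_1,\ldots,\beta_d$ must be identical, which yields the asserted formal identity. I do not expect a genuine obstacle in this argument; the only point meriting attention is the hypothesis $h\in\{0,1,\ldots,d-1\}$. It is essential, because it guarantees $\deg x^{h}\le d-1$ and thus places the right-hand side in the same $d$-dimensional space as the interpolant. For $h=d$ the right-hand side would have degree $d$ while the left-hand side stays of degree at most $d-1$, so the two could differ by a nonzero multiple of $\prod_{k=1}^{d}(x-\beta_k)$; this explains both why the restriction on $h$ appears and why the conclusion is sharp.
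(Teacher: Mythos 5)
Your proof is correct: the left-hand side is precisely the Lagrange interpolant of $x^h$ at the nodes $\beta_1,\ldots,\beta_d$, and since $x^h$ has degree $h\le d-1$, uniqueness of the interpolating polynomial of degree at most $d-1$ forces equality. The paper itself gives no proof of this lemma (it defers to the cited reference \cite{Kurihara2012coq}), and your interpolation argument --- including the remark on why $h\le d-1$ is sharp, the discrepancy at $h=d$ being exactly $\prod_{k=1}^{d}(x-\beta_k)$ --- is the standard and intended one.
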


We say that \emph{$A_j$ appears in an element $M$
of $\mathfrak{A}$}
if there exists a non-zero complex number $\alpha$
such that $M \circ A_j = \alpha A_j$. 
Let $N^{\ast}_k$ denote the set of indices $j$
such that $A_j$ appears in $A_1^{k}$ 
but does not appear in $A_1^{l}$ for each $0 \leq l \leq k-1$.
We remark that $N^{\ast}_0=\{0\}$ and $N^{\ast}_1=\{1\}$.
Moreover,
if $\theta_0$ is distinct from $\{\theta_j\}_{j=1}^d$,
then $J=|X| \prod^{d}_{j=1}\frac{A_1-\theta_j I}{\theta_0-\theta _j}$ holds by the property of the Hoffman polynomial. 
Therefore 
$\{0,1,\ldots ,d\}=\bigcup ^{d}_{k=0}N^{\ast}_k$
(a disjoint union).

\begin{lem}
\label{lem:key}
Suppose $\mathfrak{X}$ is a symmetric association scheme
of class $d$ satisfying $\theta_0$ is distinct from
$\{\theta_i\}_{i=1}^d$.
Then, the following are equivalent.
\begin{enumerate}[{$($}1{$)$}]
\item $\mathfrak{X}$ is a $P$-polynomial scheme
with respect to $A_1$.
\item $\bigcup ^{d-1}_{k=0} N^{\ast}_k
\neq
\{0,1,\ldots ,d\}$.
\end{enumerate}
\end{lem}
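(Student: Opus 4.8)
The plan is to reinterpret the sets $N^{\ast}_h$ in terms of path distances in the graph $(X,R_1)$ and then to recognize condition (2) as the statement that this graph has diameter $d$. First I would expand $A_1^{h}=\sum_{j=0}^{d}c^{(h)}_j A_j$ in the basis $\{A_j\}^d_{j=0}$ and observe that, because $A_1$ has non-negative entries, the coefficient $c^{(h)}_j$ equals the common value of $A_1^{h}$ on the relation $R_j$, namely the number of walks of length $h$ joining two $R_j$-related vertices; in particular $c^{(h)}_j\ge 0$, so no cancellation can occur. Hence $A_j$ appears in $A_1^{h}$ exactly when there is a walk of length $h$ between $R_j$-related vertices, and the shortest such length is the path distance $\partial(j)$, which is constant on $R_j$ since $A_1^{h}\in\mathfrak{A}$. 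Writing $\mathcal{M}_h=\bigcup_{l=0}^{h}N^{\ast}_l$, this identifies $\mathcal{M}_h=\{\,j:\partial(j)\le h\,\}$ and therefore
\[
N^{\ast}_h=\{\,j:\partial(j)=h\,\}.
\]
The hypothesis that $\theta_0$ is distinct from $\{\theta_j\}_{j=1}^{d}$ guarantees, via $J\in\mathbb{C}[A_1]$, that $(X,R_1)$ is connected, so every $\partial(j)$ is finite and the disjoint union $\{0,\dots,d\}=\bigcup_{h=0}^{d}N^{\ast}_h$ recorded above holds.

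Next I would translate condition (2). Since this union is disjoint, $\bigcup_{h=0}^{d-1}N^{\ast}_h=\{0,\dots,d\}\setminus N^{\ast}_d$, so (2) is equivalent to $N^{\ast}_d\neq\emptyset$, i.e. to the existence of a relation at distance $d$. The map $j\mapsto\partial(j)$ carries $\{0,\dots,d\}$ onto $\{0,1,\dots,D\}$, where $D$ is the diameter of $(X,R_1)$ (surjectivity follows by taking sub-paths of a shortest path realizing $D$); comparing cardinalities gives $D\le d$. Consequently $N^{\ast}_d\neq\emptyset$ holds if and only if $D=d$, in which case the surjection $j\mapsto\partial(j)$ onto $\{0,\dots,d\}$ is forced to be a bijection and each $N^{\ast}_h$ is a singleton $\{\xi_h\}$, with $\xi_0=0$ and $\xi_1=1$.

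For the implication (2) $\Rightarrow$ (1), assuming $D=d$, I would use that $R_{\xi_h}$ is precisely the distance-$h$ relation of $(X,R_1)$. A walk-length argument shows that if $(x,y)\in R_{\xi_k}$ and $p^{\xi_k}_{1,\xi_h}\neq 0$ then $|h-k|\le 1$, so the matrix $(p^{\xi_k}_{1,\xi_h})^{d}_{h,k=0}$ is tridiagonal; moreover, choosing a shortest path between two $R_{\xi_{h+1}}$-related vertices exhibits a neighbour at distance $h$, whence $p^{\xi_{h+1}}_{1,\xi_h}\ge 1$, so the tridiagonal matrix is irreducible. By Theorem~\ref{thm:equi} this means $\mathfrak{X}$ is a $P$-polynomial scheme with respect to the ordering $A_0,A_1,A_{\xi_2},\dots,A_{\xi_d}$, that is, with respect to $A_1$. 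For the converse (1) $\Rightarrow$ (2), if $\mathfrak{X}$ is $P$-polynomial with respect to $A_1$ then the matrices $A_{\xi_h}=v_h(A_1)$ with $\deg v_h=h$ realize distance $h$ for every $h\le d$, so $D=d$ and hence $N^{\ast}_d\neq\emptyset$.

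The main obstacle I expect is making the first step rigorous: one must justify that the coefficient of $A_j$ in $A_1^{h}$ is genuinely non-negative and constant on $R_j$, so that ``first appearance'' really coincides with path distance and no cancellation intervenes, and then confirm in the (2) $\Rightarrow$ (1) step that the off-diagonal intersection numbers $p^{\xi_{h+1}}_{1,\xi_h}$ do not vanish, which is exactly what upgrades ``tridiagonal'' to the ``irreducible tridiagonal'' hypothesis of Theorem~\ref{thm:equi}.
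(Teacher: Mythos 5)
Your proposal is correct in its overall strategy but follows a genuinely different route from the paper, and it contains one concrete gap at the final step. The paper's argument is purely algebraic: for (1) $\Rightarrow$ (2) it uses the triangular change of basis between the powers $A_1^h$ and the $P$-polynomial ordering to conclude $N^{\ast}_i=\{i\}$; for (2) $\Rightarrow$ (1) it notes that $N^{\ast}_i=\emptyset$ forces $N^{\ast}_{i+1}=\emptyset$, so nonemptiness of $N^{\ast}_d$ makes every $N^{\ast}_i$ a singleton $\{\xi_i\}$, and it then constructs polynomials $v_i$ of degree $i$ with $v_i(A_1)=A_{\xi_i}$ by triangular inversion, invoking condition (2) of Theorem~\ref{thm:equi}. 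You instead identify $N^{\ast}_h$ with the set of relations at path distance $h$ in $(X,R_1)$, using nonnegativity of walk counts --- which, incidentally, is the same fact that silently underlies the paper's one-line claim that $N^{\ast}_i=\emptyset$ implies $N^{\ast}_{i+1}=\emptyset$ --- then reduce condition (2) to ``the diameter of $(X,R_1)$ equals $d$,'' and verify condition (3) of Theorem~\ref{thm:equi}. Your route is longer but yields more: it makes explicit that under (2) the relations $R_{\xi_h}$ are exactly the distance relations, i.e.\ the scheme is that of a distance-regular graph, whereas the paper's proof never needs this interpretation.

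The gap: irreducibility of the tridiagonal matrix $(p^{\xi_k}_{1,\xi_h})^{d}_{h,k=0}$ requires \emph{both} off-diagonals to be nonzero, and you only establish $p^{\xi_{h+1}}_{1,\xi_h}\ge 1$ (the ``$c$'' entries). A tridiagonal matrix with a vanishing entry on the other off-diagonal can be reducible (it is already block triangular), so the entries $p^{\xi_h}_{1,\xi_{h+1}}$ for $0\le h\le d-1$ (the ``$b$'' entries) must also be shown nonzero; as stated, your appeal to Theorem~\ref{thm:equi}(3) is not justified. The repair is easy and uses your own device: take $(x,y)$ with $\partial(x,y)=h+1$ (possible since $D=d\ge h+1$) and a geodesic $x=x_0,x_1,\ldots,x_{h+1}=y$; then $(x_1,y)\in R_{\xi_h}$ and $x_0$ is a neighbour of $x_1$ at distance $h+1$ from $y$, so $p^{\xi_h}_{1,\xi_{h+1}}\ge 1$ --- note this exhibits the required neighbour only for one particular pair in $R_{\xi_h}$, so you must invoke the constancy of intersection numbers over a relation. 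Alternatively, the identity $k_{\xi_h}p^{\xi_h}_{1,\xi_{h+1}}=k_{\xi_{h+1}}p^{\xi_{h+1}}_{1,\xi_h}$ delivers it at once from the entries you did bound. The paper sidesteps this entire issue by using condition (2) of Theorem~\ref{thm:equi} rather than condition (3).
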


\begin{proof}
Suppose $\mathfrak{X}$ is a $P$-polynomial scheme
with respect to $A_1$.
Let $\{A_i\}^d_{i=0}$ be the $P$-polynomial ordering.
Then there exist polynomials $v_i$ of degree $i$,
such that $v_i( A_1)=A_i$ for any $i$. 
This implies that $N^{\ast}_i=\{i\}$
for any $i$,
that is,
$\bigcup ^{d-1}_{k=0} N^{\ast}_k
=
\{0,1,\ldots ,d-1\}$.
	 
Suppose $\bigcup ^{d-1}_{k=0} N^{\ast}_k
\neq
\{0,1,\ldots ,d\}$ holds.
Since $\bigcup ^{d}_{k=0} N^{\ast}_k
=
\{0,1,\ldots ,d\}$,
it follows $N^{\ast}_{d}$ is not empty.
If $N^{\ast}_{i}$ is empty for some $i$,
then $N^{\ast}_{i+1}$ is also empty.
This means that if $N^{\ast}_d$ is not empty,
then $N^{\ast}_{i}$ is not empty for any $i$.
Since $N^{\ast}_{0},N^{\ast}_{1},\ldots ,N^{\ast}_{d}$ are disjoint,
$N^{\ast}_{i}$ has size 1 for each $i\in \{0,1,\ldots ,d\}$.
Put $N^{\ast}_i=\{\xi_i\}$
and order $A_{\xi_0},A_{\xi_1},\ldots, A_{\xi_d}$.
Then we can construct polynomials $v_{i}$ of degree $i$
such that $v_{i}(A_1)=A_{\xi_i}$, and 
the first statement follows.
\end{proof}

\begin{rem}
From the proof of Lemma~\ref{lem:key},
an ordering of a $P$-polynomial association scheme with respect to
$A_1$ is uniquely determined.
\end{rem}

Let us return to prove that (2) implies (1)
in Theorem \ref{thm:main_thm}.
We have 
\[
A_1^k= \frac{\theta_0^k}{|X|}J + \sum_{h=1}^d \theta_h^k E_h.
\]
By our assumption,
it follows that
\[
E_h \circ A_l
=
\frac{Q_h (l)}{|X|}A_l\\
=
-\frac{1}{|X|}
\prod^{d}_{\substack{j=1\\
j \neq h}} \frac{\theta_0-\theta_j}{\theta_h-\theta_j}A_l.
\]
By Lemma \ref{lem:sum_Ki_2}, 
\[
A_1^k \circ A_l
=
\left(
\frac{\theta_0^k}{|X|}J + \sum_{h=1}^d \theta_h^k E_h
\right)
\circ A_l
=
\frac{1}{|X|}
\Biggl(
\theta_0^k-\sum_{h=1}^d \theta_h^k
\prod^{d}_{\substack{j=1\\
j \neq h}} \frac{\theta_0-\theta_j}{\theta_h-\theta_j}
\Biggr)
A_l=0
\]
for every $k \leq d-1$.
This means that $l$ is not an element of $N^{\ast}_k$
for every $k \leq d-1$.
By Lemma \ref{lem:key},
there exists an ordering $\{A_{\xi_i}\}^d_{i=0}$
such that $\mathfrak{X}$ is a $P$-polynomial scheme
with respect to $\{A_{\xi_i}\}^d_{i=0}$.
Moreover we have $\xi_{d}=l$.
Therefore the desired result follows.

\section{Proof of Theorem~\ref{thm:predistance-main}}
\label{sec:5}

First, we prove that (1) implies (2) in
Theorem \ref{thm:predistance-main}.
Suppose $\mathfrak{X}$ is a $P$-polynomial association scheme
with respect to $A_1$. Let $\{v_i\}^d_{i=0}$ be the polynomials
associated with the $P$-polynomial structure of $\mathfrak{X}$.
Then $\{v_i\}^d_{i=0}$ coincide with
the predistance polynomials $\{p_i\}^d_{i=0}$ of the graph $(X,R_1)$ by Remarks~\ref{rem:p-poly-drg} and \ref{rem:drg2.4}.
Therefore we have $p_d(\theta_h)=v_d(\theta_h)=P_d(h)$
for each $h\in \{0,1,\ldots,d\}$. 

Next, we prove that (2) implies (1) in
Theorem \ref{thm:predistance-main}.
Suppose
$p_d(\theta_h)=P_l(h)$
holds
for each $h\in \{0,1,\ldots ,d\}$.
Then by Lemma~\ref{eq:graph_property} and $k_l Q_h(l)=m_h P_l(h)$
\cite[Theorem~3.5, page 62]{Bannai1984aci} we have
\[
 \prod^{d}_{
\substack{
j=1\\
j\neq h
}}\frac{\theta_0-\theta _j}{\theta _h-\theta _j}
=
-\frac{m_h p_d(\theta _h)}{p_d(\theta_0)}
=-\frac{m_h P_l(h)}{k_l}= -Q_h(l)
\]
for each $h\in \{1,2,\ldots ,d\}$.
By Theorem~\ref{thm:main_thm}, $\mathfrak{X}$ is a
$P$-polynomial association scheme
with respect to $A_1$.

\bigskip

\noindent
\textbf{Acknowledgments.}
The authors thank  Edwin van Dam  for pointing out that the value in
Theorem~\ref{thm:main_thm}
can be expressed by predistance polynomials,
and the referees for some useful suggestions that improved the presentation.
The authors are also grateful to
Kazumasa Nomura, Hajime Tanaka and Paul Terwilliger
for careful reading and insightful comments.
The second author was supported by JSPS KAKENHI
Grant Number 25800011.

\end{document}